\documentclass[12pt, a4paper]{amsart}
\usepackage{color}
\usepackage{amscd,amssymb,graphics}
\usepackage[hidelinks]{hyperref}
\usepackage{amsfonts}
\usepackage{amsmath}
\usepackage{amsxtra}
\usepackage{latexsym}
\usepackage[mathcal]{eucal}

\input xy
\xyoption{all}
\usepackage{epsfig}

\oddsidemargin 0.1875 in \evensidemargin 0.1875in
\textwidth 6.0 in 
\textheight 230mm \voffset=-4mm

\newtheorem{thm}{Theorem}[section]

\newtheorem{lem}[thm]{Lemma}

\newtheorem{question}[thm]{Question}

\theoremstyle{definition}
\newtheorem{defin}[thm]{Definition}

\theoremstyle{remark}
\newtheorem{remark}[thm]{Remark}

\numberwithin{equation}{section}



\newcommand{\delete}[1]{} 
\newcommand{\nt}{\noindent}

\def\eps{{\varepsilon}}
\newcommand{\sk}{\vskip 0.1cm}

\newcommand{\ben}{\begin{enumerate}}

\newcommand{\een}{\end{enumerate}}
\newcommand{\bit}{\begin{itemize}}

\newcommand{\eit}{\end{itemize}}

\def\R {{\mathbb R}}
\def\N {{\mathbb N}}
\def\Z {{\mathbb Z}}


\newcommand{\RUC}{\mathrm{RUC}}

\newcommand{\WAP}{\mathrm{WAP}}

\newcommand{\Asp}{\mathrm{Asp}}
\newcommand{\Tame}{\mathrm{Tame}}

\def\A{{\mathcal{A}}}

\def\F{{\mathcal F}}

\def\QED{\nobreak\quad\ifmmode\roman{Q.E.D.}\else{\rm Q.E.D.}\fi}

\def\a{\alpha}

\def\g{\gamma}


\newcommand{\cls}{\rm{cl\,}}

\newcommand{\lan}{\langle}
\newcommand{\ran}{\rangle}







\begin{document}

\title[]
{Tame functionals on Banach algebras}

\sk


\author[]{Michael Megrelishvili}
\address{Department of Mathematics,
Bar-Ilan University, 52900 Ramat-Gan, Israel}
\email{megereli@math.biu.ac.il}
\urladdr{http://www.math.biu.ac.il/$^\sim$megereli}

\date{October 2, 2017}

\begin{abstract}
In the present note 
we introduce \textit{tame functionals} on Banach algebras. A functional $f \in \A^*$ on a Banach algebra $\A$ is tame if the naturally defined linear operator $\A \to \A^*, a \mapsto f \cdot a$ factors through Rosenthal Banach spaces (i.e., not containing a copy of $l_1$). Replacing Rosenthal by reflexive we get a well known concept of weakly almost periodic functionals. 
So, always $\WAP(\A) \subseteq \Tame(\A)$. We show that tame functionals on $l_1(G)$ are induced exactly by \textit{tame functions} (in the sense of topological dynamics) on $G$ for every discrete group $G$. 
That is, $\Tame(l_1(G))=\Tame(G)$. Many interesting tame functions on groups come from dynamical systems theory. Recall that $\WAP(L_1(G))=\WAP(G)$ (Lau \cite{Lau77}, \"{U}lger \cite{Ulger86}) for every locally compact group $G$.  
It is an open question if  $\Tame(L_1(G))=\Tame(G)$ holds for (nondiscrete)  locally compact groups. 	 
\end{abstract}

\subjclass[2010]{43A20, 46Bxx, 46Hxx}

\keywords{Asplund space, reflexive space, Rosenthal dichotomy, Rosenthal space, tame functional, WAP functional}

\thanks{This research was partially supported by a grant of ISF 668/13 (Israel Science Foundation)}
\maketitle
\setcounter{tocdepth}{1}
\tableofcontents

\section{Introduction}

Weakly almost periodic  (WAP) functionals play a major role in the theory of Banach algebras. See for example Young
 \cite{Young}, Lau \cite{Lau77}, Filali, Neufang and Monfared \cite{FNM} and references therein. 
Recall that  a functional $\lambda \in \A^*$ on a Banach algebra $\A$ is said to be WAP (and write $\lambda \in \WAP (\A)$) 
	 if the natural linear operator  $$\mathfrak{L}_{\lambda}: \A \to \A^*, \ \ \ 
	a \mapsto  \lambda \cdot a$$
	factors through a reflexive Banach space; 
	notation: $\lambda \in \WAP(A)$. 
	As it follows from classical results \cite{DFJP}, 
	a characterization property is that the subset $\lambda \cdot B(\A)$ is weakly precompact in $\A^*$, where $B(\A)$ is the unit ball of $\A$ and for every $a \in \A$ the functional $\lambda \cdot a$ is defined as $\A \to \R, \ x \mapsto \lambda(a \star x)$, where $\star$ is the multiplication in $\A$.  	

Our aim is to introduce \textit{tame functionals}. In this case Rosenthal Banach spaces play the role of reflexive spaces. The technical characterization property comes combining some results of Rosenthal \cite{Ros0}, Talagrand \cite{Tal}, Saab \& Saab \cite{SS}. The criterion, Lemma \ref{l:RosBanSpCharact}, asserts that the subset $\lambda \cdot B(\A)$ is small in other sense, namely it is \textit{tame}, meaning that it does not contain an independent sequence of vectors, Definition \ref{d:tameF}. 
These concepts come from a celebrated $l_1$-dichotomy theorem of Rosenthal and play a major  role in several research lines in Banach space theory and also in topological dynamics. 

In the present note we 
study the tameness concept also in the theory of Banach algebras. 
The main result here is 
Theorem \ref{t:mainl_1} which asserts that tame functionals on the Banach algebra $l_1(G)$ are induced exactly by \textit{tame functions} (in the sense of topological dynamics) on $G$ for every discrete group $G$. 
That is, $\Tame(l_1(G))=\Tame(G)$. 
It should be compared with the well known formula for the WAP case. 
Namely, $\WAP(L_1(G))=\WAP(G)$ for every locally compact group $G$ (see Lau \cite{Lau77} and \"{U}lger \cite{Ulger86}). It is an open question if  $\Tame(L_1(G))=\Tame(G)$ holds for (nondiscrete)  locally compact $G$. 

Tame functions on topological groups (on the integers $\Z$ for instance) is the theme of quite intensive investigation in topological dynamics. Many interesting dynamical systems provide tame (not WAP) functions on acting groups. For example, this happens for Sturmian-like symbolic dynamical $G$-systems, with $G=\Z^k$ or (noncommutative modular group)  $G=PSL_2(\Z)$, \cite{GM-tame, GM-c}. So, by Theorem \ref{t:mainl_1} the class of tame functionals on $l_1(G)$ (which are not WAP functionals) is very rich. 

\section{Some definitions: independence and tameness} 

Let $l_{\infty}(X)$ be the Banach space (in the $\sup$-norm) of all bounded real valued functions on $X$. If $X$ is a topological space denote by $C(X)$ the Banach subspace of all continuous bounded functions. 

Let $\{f_n: X \to \R\}_{n \in \N}$ be a uniformly bounded sequence of real valued functions on a \emph{set} $X$. Following Rosenthal \cite{Ros0} we say that
this sequence is an \emph{$l_1$-sequence} 
if there exists a real constant $a >0$
such that for all $n \in \N$ and choices of real scalars $c_1, \dots, c_n$ we have
$$
a \cdot \sum_{i=1}^n |c_i| \leq ||\sum_{i=1}^n c_i f_i||_{\infty}.
$$

Then the closed linear span in $l_{\infty}(X)$
of the sequence $f_n$ be linearly homeomorphic to the Banach space $l_1$.
In fact, in this case the map $$l_1 \to l_{\infty}(X), \ \ (c_n)_{n \in \N} \to \sum_{n \in \N} c_nf_n$$ is a linear homeomorphic embedding.

Similarly can be defined a sequence of vectors in a Banach space which is equivalent to an  $l_1$-sequence.  
A Banach space $V$ is said to be {\em Rosenthal} if it does
not contain an isomorphic copy of $l_1$, or equivalently, if $V$ does not contain a sequence 
of vectors which is equivalent to an $l_1$-sequence. 

A Banach space $V$ is an {\em Asplund\/} space if the
dual of every separable Banach subspace is separable. Every Asplund space is Rosenthal 
and every reflexive space is Asplund. 

\subsection{Independence and tame systems} 
A sequence  $\{f_n: X \to \R\}_{n \in \N}$ of real valued functions on a set
$X$ is said to be \emph{independent} (see \cite{Ros0,Tal}) if
there exist real numbers $a < b$ such that
$$
\bigcap_{n \in P} f_n^{-1}(-\infty,a) \cap  \bigcap_{n \in M} f_n^{-1}(b,\infty) \neq \emptyset
$$
for all finite disjoint subsets $P, M$ of $\N$. 
Every bounded independent sequence is an $l_1$-sequence \cite{Ros0}.

	We say that a bounded family $F$ of real valued (not necessarily, continuous)  functions on a set $X$ is {\it tame}, \cite{GM-tame} if $F$ does not contain an independent sequence. 
For example, by \cite{Me-Helly} 
every bounded family of (not necessarily continuous) functions $[0,1] \to \R$ with total bounded variation (e.g., \textit{Haar systems}) is tame.  In fact this remains true replacing the set  $[0,1]$ by any circularly (e.g., linearly) ordered set.  

As to the negative examples. 
The sequence of projections on the Cantor cube $$\{\pi_n: \{0,1\}^{\N} \to \{0,1\}\}_{n \in \N}$$ 
and the sequence of Rademacher functions 
$$\{r_n: [0,1] \to \R\}_{n \in \N}, \ \ r_n(x):=sgn (\sin (2^n \pi x))$$
both are independent (hence, nontame).

%

\begin{defin}  \label{d:tameF} 
Let $V$ be a real Banach space and $M \subset V^*$ be a subset in the dual Banach space $V^*$. A  bounded  subset $F$ of $V$ is said to be \emph{tame for} $M$ if $F$, as a family of functions on $M$ is a tame family. If $F$ is tame for the unit ball $B(V^*)$ of $V^*$ (equivalently, for every bounded subset) then we simply say that $F$ is a \textit{tame subset in} $V$. 
\end{defin}

The family of tame subsets in a given Banach space is closed under taking subsets, finite unions, weak and norm closures and convex hulls. 
The case of subsets is trivial. The case of finite unions is easy (directly or using Lemma \ref{f:sub-fr}). For convex hulls use Lemma \ref{c:conv}.  
We explain here the case of the weak closure 
(which implies the norm closure case). 

 For definition and properties of \textit{fragmentability} see the Appendix below. 
 In particular, $\F(X)$ denotes the set of all fragmented maps $X \to \R$. If $X$ is compact this is exactly the set of functions with the point of continuity property (PCP), Lemma \ref{r:fr1}.1. 
	
\begin{lem} \label {l:w-clos}
	The weak closure of a tame subset $F$ of a Banach space $V$ is tame.  
\end{lem}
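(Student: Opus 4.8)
The plan is to transfer the statement to \emph{pointwise} closures, where the Rosenthal–Bourgain–Fremlin–Talagrand circle of ideas and fragmentability apply directly. Throughout, identify each $v\in V$ with the $w^*$-continuous function $x\mapsto x(v)$ on the compact space $K:=(B(V^*),w^*)$; thus $V\subseteq C(K)$ canonically, $F$ is a bounded subset of $C(K)$, and so is its weak closure $\overline F^{\,w}$ (the weak closure of a norm-bounded set is norm bounded, by the uniform boundedness principle). By Definition \ref{d:tameF}, what has to be shown is precisely that $\overline F^{\,w}$, regarded as a family of functions on $K$, contains no independent sequence.

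First a soft observation. The canonical map $(V,\text{weak})\to\R^{K}$ is injective (since $B(V^*)$ norms $V$) and continuous (each coordinate $v\mapsto x(v)$, $x\in K$, is weakly continuous), so $\overline F^{\,w}$ is contained in the pointwise closure $\overline F^{\,p}$ of $F$ taken in $\R^{K}$. Since moreover $F\subseteq\overline F^{\,w}\subseteq\overline F^{\,p}$, passing to pointwise closures shows that $F$ and $\overline F^{\,w}$ have \emph{the same} pointwise closure in $\R^{K}$, namely $\overline F^{\,p}$.

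The main ingredient is the characterization — available from Rosenthal's $l_1$-dichotomy, the Bourgain–Fremlin–Talagrand theorem and Namioka-type arguments (Rosenthal \cite{Ros0}, Talagrand \cite{Tal}; see also \cite{GM-tame} and the Appendix) — that \emph{a bounded family $G\subseteq C(K)$ is tame if and only if every function in its pointwise closure $\overline G^{\,p}$ is fragmented, i.e. $\overline G^{\,p}\subseteq\F(K)$}. Granting this, the proof closes formally: tameness of $F$ gives $\overline F^{\,p}\subseteq\F(K)$; by the previous paragraph $\overline F^{\,p}$ is also the pointwise closure of the bounded family $\overline F^{\,w}\subseteq C(K)$; so the same criterion, applied now to $G=\overline F^{\,w}$, yields that $\overline F^{\,w}$ is tame, as required.

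I expect all the substance to sit in the ``only if'' half of that characterization — passing from ``$F$ has no independent sequence'' to ``every pointwise cluster function of $F$ is fragmented''. This is exactly where Rosenthal's dichotomy enters (a uniformly bounded sequence in $\R^{K}$ without a pointwise convergent subsequence contains an independent subsequence, so tameness of $F$ forces every sequence in $F$ to have a pointwise convergent subsequence), after which the Bourgain–Fremlin–Talagrand theorem propagates this property to $\overline F^{\,p}$ and a Namioka–Baire-category argument identifies the resulting pointwise limits as fragmented functions. The one genuinely delicate point is that $K=(B(V^*),w^*)$ need not be metrizable; one circumvents this by noting that tameness of a family is witnessed by a countable subfamily, and that a countable subfamily $\{g_n\}\subseteq F$ may be pushed forward along $K\to\R^{\N}$, $x\mapsto(g_n(x))_{n}$, onto its separable metrizable trace, through which every relevant pointwise limit factors; on that trace the metrizable form of the theorem applies. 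Finally, if one prefers to bypass fragmentability entirely, the two middle paragraphs survive verbatim with ``$\overline G^{\,p}\subseteq\F(K)$'' replaced by ``every sequence in $\overline G^{\,p}$ has a pointwise convergent subsequence'', which is the more elementary formulation of the same dichotomy.
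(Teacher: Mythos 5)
Your proposal is correct and follows essentially the same route as the paper: both reduce the statement to the observation that $\overline{F}^{\,w}$ lies in the pointwise closure $\overline{F}^{\,p}$ of $F$ in $\R^{B(V^*)}$ (you via continuity of $(V,\mathrm{weak})\to\R^{B(V^*)}$, the paper via the linearity/restriction argument), and then invoke the equivalence ``tame $\Leftrightarrow$ pointwise closure consists of fragmented maps'' (Lemma \ref{f:sub-fr}) applied to both $F$ and $\overline{F}^{\,w}$. The closing speculation about how that equivalence is proved is not needed, since the paper records it as Lemma \ref{f:sub-fr}.
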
 
\begin{proof}
	 Denote by $M:=\overline{F}^{w}$ the weak closure of $F$ in the Banach space $V$ and by $\overline{F}^p$ the pointwise closure of $F$ in $\R^{B(V^*)}$. Clearly,  $M \subset  C(B(V^*))$ and $M$ remains norm bounded. By Lemma \ref{f:sub-fr} it is enough to show that the pointwise closure $\overline{M}^{p}$ of $M$ in $\R^{B(V^*)}$ consist of fragmented maps. That is, $\overline{M}^p \subset \F(B(V^*))$. 
	 
	 Observe that 
	 $M \subset \overline{F}^p$. Indeed, since $F$ consist of linear maps $V^* \to \R$ and every linear  (not necessarily continuous) map on $V^*$ is uniquely defined by its restriction on the ball $B(V^*)$ it follows that 
	 the pointwise closure $\overline{F}^p$ of $F \subset \R^{B(V^*)}$ is the same as the pointwise closure of $F$ in $\R^{V^*}.$ Now we get that  $\overline{M}^p \subset \overline{F}^{p}$. 
	Since $F$ is a tame system on $B(V^*)$, by Lemma  \ref{f:sub-fr} this means that
	$\overline{F}^p \subset \F(B(V^*))$. Hence also $\overline{M}^p \subset \F(B(V^*))$, as desired. 	
\end{proof}

The following characterization of Rosenthal Banach spaces is a reformulation of some known results (see in particular, \cite{SS} and Lemma  \ref{f:sub-fr} below).

\begin{lem} \label{l:RosBanSpCharact} Let $V$ be a Banach space. 
	The following conditions are equivalent:  
	\ben
	\item $V$ is a Rosenthal Banach space; 
	\item The unit ball $B(V)$ is a tame subset of $V$;
	\item 
	Each $x^{**} \in V^{**}$ is a fragmented map when restricted to the
	weak${}^*$ compact ball $B(V^*)$ of $V^*$. Equivalently, $B(V^{**}) \subset \F(B(V^*))$.
	\een
\end{lem}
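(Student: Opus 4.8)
The plan is to establish the cycle of implications (1) $\Rightarrow$ (3) $\Rightarrow$ (2) $\Rightarrow$ (1), drawing on Rosenthal's $l_1$-theorem, Odell--Rosenthal/Haydon-type characterizations of spaces not containing $l_1$ via pointwise-limit behavior of $V$ inside $V^{**}$, and the fragmentability machinery quoted from the Appendix (Lemma \ref{f:sub-fr}, Lemma \ref{r:fr1}).

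\textbf{Step (1) $\Rightarrow$ (3).} Assume $V$ is Rosenthal. Fix $x^{**} \in V^{**}$; restrict it to the weak${}^*$ compact ball $K := B(V^*)$. We want $x^{**}|_K \in \F(K)$, i.e., $x^{**}|_K$ is a fragmented map. By the Odell--Rosenthal theorem, since $V$ contains no copy of $l_1$, every element of $B(V^{**})$ is the pointwise limit on $K$ of a sequence from $V$ (a sequence in the ball, by Goldstine plus a scaling argument); more to the point, the relevant Baire-class-one / fragmentability statement is precisely that $x^{**}|_K$, being a $w^*$-limit of continuous functions with the additional Rosenthal structure, has the point of continuity property, hence is fragmented. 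Here I would invoke the quoted results of Rosenthal \cite{Ros0} and Talagrand \cite{Tal} together with Lemma \ref{r:fr1}.1 which identifies $\F(K)$ with the functions having PCP for compact $K$. The cleanest route is: no independent sequence in $B(V)$ (which follows from Rosenthal since an independent bounded sequence is an $l_1$-sequence) forces, via the Ramsey-type argument of Rosenthal--Talagrand, that the pointwise closure of $B(V)$ in $\R^K$ consists of fragmented functions; since $x^{**}|_K$ lies in that pointwise closure (Goldstine), it is fragmented.

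\textbf{Step (3) $\Rightarrow$ (2).} Assume $B(V^{**}) \subset \F(K)$ where $K = B(V^*)$. Since $B(V) \subset B(V^{**})$, the ball $B(V)$ is a family of fragmented functions on $K$, and moreover its pointwise closure in $\R^K$ lies inside $B(V^{**})$ by Goldstine's theorem, hence inside $\F(K)$. By Lemma \ref{f:sub-fr} (a family whose pointwise closure consists of fragmented maps is tame), $B(V)$ is a tame family of functions on $K = B(V^*)$, which by Definition \ref{d:tameF} means exactly that $B(V)$ is a tame subset of $V$.

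\textbf{Step (2) $\Rightarrow$ (1).} Assume $B(V)$ is tame, i.e., contains no independent sequence. Suppose for contradiction that $V$ contains an isomorphic copy of $l_1$; then $B(V)$ contains a sequence $(v_n)$ equivalent to the $l_1$-basis. Applying suitable functionals (the coordinate functionals of the embedded $l_1$, which live in a bounded subset of $V^*$ after normalization), one produces from $(v_n)$ an independent sequence of functions on $B(V^*)$ — this is the converse direction in Rosenthal's dichotomy circle, that an $l_1$-sequence, viewed appropriately, yields independence. This contradicts tameness of $B(V)$, so $V$ is Rosenthal.

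\textbf{Main obstacle.} The delicate point is Step (1) $\Rightarrow$ (3): transferring ``no $l_1$'' into the pointwise/fragmentability statement about all of $B(V^{**})$ restricted to $B(V^*)$, rather than just about $B(V)$ itself. This requires the combination of Rosenthal's $l_1$-theorem with Talagrand's analysis of pointwise compactness and with the Saab--Saab \cite{SS} results on fragmentability of the bidual ball; the subtlety is that fragmentability is not obviously preserved under pointwise closure in general, so one must use the specific Ramsey-combinatorial content (independence is a pointwise-closed-hereditary obstruction) to conclude that the whole pointwise closure of $B(V)$ — which by Goldstine captures $B(V^{**})|_K$ — stays fragmented. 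Once Lemma \ref{f:sub-fr} is in hand in the precise form ``$F$ tame $\iff$ $\overline{F}^p \subset \F(\cdot)$,'' the equivalences become a short formal argument, so essentially all the weight sits in correctly citing and assembling these known theorems.
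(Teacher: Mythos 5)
Your proposal is correct and follows essentially the route the paper intends: the paper gives no written proof, stating only that the lemma is a reformulation of known results via \cite{SS} and Lemma~\ref{f:sub-fr}, and your cycle of implications is exactly an unpacking of that --- Goldstine identifies the pointwise closure of $B(V)$ in $\R^{B(V^*)}$ with $B(V^{**})$, and the equivalences (1)$\Leftrightarrow$(2)$\Leftrightarrow$(4) of Lemma~\ref{f:sub-fr} applied to $F=B(V)\subset C(B(V^*))$ do the rest. One small remark: the appeal to Odell--Rosenthal in Step (1)$\Rightarrow$(3) is unnecessary (and its sequential form would require separability); your ``cleanest route'' via tameness of $B(V)$ and Lemma~\ref{f:sub-fr}(2)$\Rightarrow$(4) is the right argument and suffices on its own.
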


So, in particular, a Banach space $V$ is Rosenthal iff 
every bounded subset $F \subset V$ is tame  (as a family of functions) on every bounded subset $Y  \subset V^*$ of the dual space $V^*$, iff $F$ is eventually fragmented (Definition \ref{def:fr}.3) on $Y$.



Note also the following characterizations of Asplund and reflexive spaces. 
	 A Banach space $V$ is Asplund (reflexive) iff every bounded subset $F \subset V$ is a fragmented family  of functions (has Grothendieck's {\em Double Limit Property} (DLP)) on every bounded subset $Y \subset V^*$. 
	


%

Note that in all three cases the converse statements are true; as it follows from results of \cite{GM-tame} 
every tame (fragmented, DLP) bounded family $F$ of continuous functions on $X$ can be represented 
on a Rosenthal (Asplund, reflexive) Banach space. 
Recall that a representation of $F$ on 
a Banach space $V$ consists of
a pair $(\nu,\a)$ of bounded maps $$\nu: F \to V, \ \alpha: X \to V^*$$ where $\a$ is weak-star continuous and 
$$
f(x)= \langle \nu(f), \a(x) \rangle
\ \ \ \forall \ f \in F, \ \ \forall \ x \in X.
$$
In other words, the following diagram commutes
$$\xymatrix{ F \ar@<-2ex>[d]_{\nu} \times X
	\ar@<2ex>[d]^{\a} \ar[r]  & \R \ar[d]^{id } \\
	V \times V^* \ar[r]  &  \R }
$$


\subsection{Tame functionals} 
\begin{defin} \label{d:RosOp} 
	Let $T: V_1 \to V_2$ be a continuous linear operator between Banach spaces.  We say that $T$ is a \textit{Rosenthal operator} 
	if it factors through a Rosenthal Banach space. That is, there exists a Rosenthal Banach space $V_3$ and continuous linear operators $\a: V_1 \to V_3, \ \beta: V_3 \to V_2$ such that $T=\beta \circ \a$. 
\end{defin}

If either $V_1$ or $V_2$ are Rosenthal Banach spaces then every linear operator $V_1 \to V_2$ is Rosenthal. 
Note that if $T(V_1)$ is closed in $V_2$ (e.g., if $T$ is onto) then 
$T$ is a Rosenthal operator if and only if $T(V_1)$ is a Rosenthal Banach space.
 
\sk
For every functional $\lambda \in \A^*$ on a Banach algebra $\A$, with the multiplication $\star$, and every element $a \in \A$ 
we use the standard notation $\lambda \cdot a$ for the functional  
\begin{equation} \label{eq:functional} 
\lambda \cdot a: \A \to \R, \ x \mapsto \lambda (a \star x). 
\end{equation}

\begin{defin} \label{d:TameFunctional} 
	Let $\A$ be a Banach algebra and $\lambda \in \A^*$ be a functional. We say that $\lambda$ is \textit{tame} and write $\lambda \in \Tame (\A)$ if the natural continuous linear operator $$\mathfrak{L}_{\lambda}: \A \to \A^*, \ \ \ 
	a \mapsto  \lambda \cdot a$$
	is Rosenthal in the sense of Definition \ref{d:RosOp}; 
	notation: $\lambda \in \Tame(\A)$. 
\end{defin}
 
 \begin{remark}
 	Definition \ref{d:RosOp}, 
  without any name, appears in \cite{SS}. 
 Similarly can be defined \textit{Asplund and reflexive operators}. They lead (in the scheme of Definition \ref{d:TameFunctional}) to the corresponding definitions for functionals between Banach algebras. 
 In the latter case it is exactly \textit{weakly compact functional} as it follows from \cite{DFJP}. Clearly, 
 $$
 \WAP(\A) \subset \Asp(\A) \subset \Tame(\A). 
 $$
 \end{remark}

In general the inclusions are distinct (e.g., for $\A=l_1(\Z)$). 


The following technical lemma 
is a reformulation of  deep results by Rosenthal, Talagrand and Saab\&Saab.

\begin{lem} \label{l:TameFunctional} Let $T: V_1 \to V_2$ be a continuous linear operator between Banach spaces. 
The following conditions are equivalent: 
	\ben 
	\item $T: V_1 \to V_2$ is a Rosenthal operator; 
	\item For every $w^*$-compact 
	subset $M$ in $V_2^*$ and every $a^{**} \in V_1^{**}$ the restriction of $a^{**}$ to  $T^*(M)$ has a point of continuity; 
	\item $B(V_1)$ is a tame 
	family for $T^*(B(V_2^{*}))$ (Definition \ref{d:tameF}); 
	\item $T(B(V_1))$ is a tame family in $V_2$ (for $B(V_2^{*})$).
	\een
\end{lem}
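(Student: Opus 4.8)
The plan is to close the cycle $(1)\Rightarrow(4)\Rightarrow(3)\Rightarrow(2)\Rightarrow(1)$, isolating two elementary reductions from a single deep input. The elementary steps all rest on one observation: for a uniformly bounded sequence $\{g_n\}$ of functions on a set $Y$ and any \emph{surjection} $q\colon X\twoheadrightarrow Y$, the sequence $\{g_n\}$ is independent on $Y$ if and only if $\{g_n\circ q\}$ is independent on $X$ (immediate from $(g_n\circ q)^{-1}(U)=q^{-1}(g_n^{-1}(U))$ together with the fact that a surjection maps preimages of nonempty sets to nonempty sets). Consequently tameness is preserved and reflected by precomposition with surjections, passes to subsets, and is unaffected by rescaling the domain by a positive scalar.

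\emph{$(3)\Leftrightarrow(4)$ and $(1)\Rightarrow(4)$.} Since $\langle Ta,\mu\rangle=\langle a,T^*\mu\rangle$ for $a\in B(V_1)$, $\mu\in B(V_2^*)$, the family $T(B(V_1))$ regarded as functions on $B(V_2^*)$ is the family $B(V_1)$ regarded as functions on $T^*(B(V_2^*))$ precomposed with the $w^*$-$w^*$-continuous surjection $T^*\colon B(V_2^*)\twoheadrightarrow T^*(B(V_2^*))$ (a surjection of compacta, by Banach--Alaoglu); so $(3)\Leftrightarrow(4)$. For $(1)\Rightarrow(4)$ write $T=\beta\circ\alpha$ with $V_3$ Rosenthal; by Lemma \ref{l:RosBanSpCharact} the ball $B(V_3)$, hence $\alpha(B(V_1))\subseteq\|\alpha\|\,B(V_3)$, is tame, and via $\langle\beta v,\mu\rangle=\langle v,\beta^*\mu\rangle$ the family $T(B(V_1))=\beta(\alpha(B(V_1)))$, as functions on $B(V_2^*)$, is (up to the surjection $\beta^*\colon B(V_2^*)\twoheadrightarrow\beta^*(B(V_2^*))\subseteq\|\beta\|\,B(V_3^*)$) the tame family $\alpha(B(V_1))$ restricted to the bounded set $\beta^*(B(V_2^*))$, hence tame.

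\emph{$(3)\Leftrightarrow(2)$.} Put $Y:=T^*(B(V_2^*))$, a $w^*$-compact subset of $V_1^*$. Every $w^*$-compact $M\subseteq V_2^*$ is norm bounded (so $T^*(M)$ is a $w^*$-compact subset of some multiple of $Y$), and conversely every $w^*$-closed $N\subseteq Y$ equals $T^*\big((T^*)^{-1}(N)\cap B(V_2^*)\big)$; combining these with homogeneity, condition (2) says exactly that each $a^{**}\in V_1^{**}$ has a point of continuity on every $w^*$-closed subset of $Y$, i.e.\ is fragmented on $Y$, i.e.\ $B(V_1^{**})\subseteq\F(Y)$ (using Lemma \ref{r:fr1}.1, as $Y$ is compact). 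By Goldstine's theorem the pointwise closure of $B(V_1)$ in $\R^{Y}$ is precisely $\{a^{**}|_{Y}:a^{**}\in B(V_1^{**})\}$ (a pointwise limit on $Y$ of a net in $B(V_1)$ is the restriction of a $w^*$-cluster point of that net in $B(V_1^{**})$), so $B(V_1^{**})\subseteq\F(Y)$ amounts to saying this pointwise closure consists of fragmented maps on $Y$, which by Lemma \ref{f:sub-fr} is equivalent to $B(V_1)$ being a tame family on $Y$, i.e.\ to (3).

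\emph{$(4)\Rightarrow(1)$: the deep step.} Assuming (4), by Lemma \ref{l:w-clos} the norm closure $W:=\overline{T(B(V_1))}$ is a bounded, convex, symmetric, tame subset of $V_2$; by Rosenthal's $l_1$-dichotomy \cite{Ros0} (cf.\ Lemma \ref{l:RosBanSpCharact}) tameness of $W$ is the same as $W$ containing no $l_1$-sequence. Apply the Davis--Figiel--Johnson--Pe\l{}czy\'nski interpolation construction \cite{DFJP} to $W$: it produces a Banach space $V_3$, a continuous injection $j\colon V_3\hookrightarrow V_2$, and a constant $c$ with $W\subseteq c\,B(V_3)$; since $T(B(V_1))\subseteq W$, the operator $T$ factors as $T=j\circ T'$ with $T'\colon V_1\to V_3$ bounded. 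It remains to see that $V_3$ is Rosenthal, and this is exactly the $l_1$-analogue of the DFJP reflexivity theorem due to Saab--Saab \cite{SS} (resting in turn on \cite{Ros0,Tal}): the interpolation space built from $W$ is Rosenthal precisely when $W$ contains no $l_1$-sequence. This invocation is the one point where genuine work is unavoidable; everything else is the bookkeeping above. (As a consistency check, when $T(V_1)$ is norm closed the DFJP step can be bypassed: the open mapping theorem makes $T(B(V_1))$ and $B(T(V_1))$ contain fixed scalar multiples of one another, so (4) forces $B(T(V_1))$ tame, whence $T(V_1)$ is Rosenthal by Lemma \ref{l:RosBanSpCharact} and $T$ is a Rosenthal operator by the remark right after Definition \ref{d:RosOp}.)
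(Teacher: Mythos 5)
Your proof is correct, and for the two ``soft'' equivalences it coincides with the paper's: your $(3)\Leftrightarrow(4)$ is exactly the paper's use of Lemma \ref{l:genQuot}.4(a) together with Lemma \ref{f:sub-fr} (you just run it directly through independent sequences and the surjection $T^*\colon B(V_2^*)\to T^*(B(V_2^*))$), and your $(2)\Leftrightarrow(3)$ is the paper's Goldstine-plus-fragmentability argument, in fact with the reduction from arbitrary $w^*$-compact $M$ to the single set $T^*(B(V_2^*))$ spelled out more carefully than in the paper (where it is asserted in one line, and where $M:=B(V_1^*)$ is evidently a slip for $B(V_2^*)$). Where you genuinely diverge is in how condition (1) is tied to the rest. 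The paper imports the equivalence $(1)\Leftrightarrow(2)$ wholesale from \cite[Theorem 12]{SS}, remarking only that the convexity hypothesis on $M$ there can be dropped, and never needs an explicit factorization argument. You instead give an elementary direct proof of $(1)\Rightarrow(4)$ via Lemma \ref{l:RosBanSpCharact} (a pleasant addition the paper does not contain), and for the converse $(4)\Rightarrow(1)$ you open up the machinery: DFJP interpolation applied to $\overline{T(B(V_1))}$ plus the assertion that the resulting interpolation space contains no $l_1$ when the generating set is weakly precompact. That assertion is exactly where the depth of \cite{SS} (resting on \cite{Ros0,Tal}) lives, so mathematically you are leaning on the same external input as the paper, just in a different formulation. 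The one thing to be careful about is citation granularity: the paper only vouches for \cite[Theorem 12]{SS} in its point-of-continuity form, i.e.\ $(1)\Leftrightarrow(2)$; if the ``DFJP space of a weakly precompact set is Rosenthal'' statement is not literally what that reference proves, you should instead close your cycle by deducing $(2)$ from $(4)$ via your own $(4)\Rightarrow(3)\Rightarrow(2)$ and then applying \cite[Theorem 12]{SS} as $(2)\Rightarrow(1)$ --- which your argument already makes available at no extra cost. This is a matter of attribution, not a gap.
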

\begin{proof}
	(1) $\Leftrightarrow$ (2) is a minor modification of \cite[Theorem 12]{SS}.
	Note that \cite[Theorem 12]{SS} was formulated for \emph{convex} subsets $M$. However, as the original proof from \cite{SS} demonstrates, we can write in the formulation \emph{arbitrary $w^*$-compact subset} $M$. 
	
	(2) $\Leftrightarrow$ (3) 
	First of all observe that it is enough to consider only a particular case of $M:=B(V_1^*)$. 
	By Goldstein's theorem the pointwise closure of $B(V_1)$, as a subset of functions on the weak-star compact space $B(V_1^*)$ (that is, the closure in the product space $\R^{B(V_1^*)}$), is the unit ball $B(V_1^{**})$ of the second dual.
	In fact, the same is true replacing $B(V_1^*)$ by $cB(V_1^*)$ for any given positive $c>0$. 
	
	 Every element in the pointwise closure of 
	$B(V_1)$, as a subset of functions on the weak-star compact space $T^*(B(V_2^{*}))$, can be viewed as a restriction of some $a^{**} \in V_1^{**}$. Indeed, since $T^*$ is bounded there exists a positive constant $c >0$ such that $T^*(B(V_2^{*})) \subset c B(V^*_1)$. Apply Lemma \ref{l:genQuot}.3 to the inclusion map 
	$$q: X_1=T^*(B(V_2^{*})) \hookrightarrow X_2=c B(V^*_1)$$
	with $X_1:=T^*(B(V_2^{*})) \subset X_2:=c B(V^*_1)$, $F_1:=B(V_1), \ F_2:=\{v \circ q: v \in B(V_1)\}$, where $v \circ q$ is a restriction of the map $v: cB(V_1^*) \to \R, \psi \mapsto \lan v,\psi \ran$ to $T^*(B(V_2^{*}))$. 
	
	 Now the equivalence (2) $\Leftrightarrow$ (4) of Lemma \ref{f:sub-fr} finishes the proof. 
	
	(3) $\Leftrightarrow$ (4) Lemma \ref{l:genQuot}.4(a) (for the map $q=T^*:  B(V_2^{*}) \to T^*(B(V_2^{*}))$) implies that 
	$B(V_1)$ is a Rosenthal family on $T^*(B(V_2^{*}))$ iff 
	$T(B(V_1))$ is a Rosenthal family on $B(V_2^{*})$. Now recall that Rosenthal family and tame family are the same in our setting by Lemma \ref{f:sub-fr}.  
\end{proof}

%

\sk

\section{Tame functionals on the group algebra $l_1(G)$} 

Let $G$ be a discrete group. Denote by $l_1(G)$ the usual (unital) Banach group algebra with respect to the convolution product $\star$ and the $l_1$-norm. 

For every $s \in G$ we have $\delta_s \in l_1(G)$, where $\delta_s(s)=1$ and $\delta_s(x)=0$ for every $x \in G$ with $x \neq s$. We have the embedding 
$$\delta: G \hookrightarrow l_1(G), \ \ s \mapsto \delta_s,$$ where 
\begin{equation}
\delta_s \star \delta_t=\delta_{st} \ \ \ \forall s,t \in G. 
\end{equation}  
For a group $G$ and a real-valued function $f: G \to \R$ the left translation $fs$ by $s \in G$ is defined as the function $$fs: G \to \R, \ (fs)(t)=f(st)$$ and $$fG:=\{fs: s \in G\}.$$ 

\nt Below the functional $f \cdot \delta_s$ for $f \in l_{\infty}(G)=l_1(G)^*$ is defined as in Equation \ref{eq:functional}. Then 
\begin{equation} 
(f \cdot \delta_s)(t)= (f \cdot \delta_s)(\delta_t) = f( \delta_s \star \delta_t)= f(\delta_{st})=f(st). 
\end{equation}
This means that 
\begin{equation} \label{l:simple} 
f \cdot \delta_s=fs, \ \ \ f \cdot \delta(G)=fG.
\end{equation}

In contrast to the algebra $L_1(G)$ for locally compact 
groups $G$, the algebra $l_1(G)$
has the unit element $\delta_e$, where $e$ is the unit element of $G$. 
Lemma \ref{l:NormDenseInl_1} implies that $span\{\delta(G)\}$ is norm dense in $l_1(G)$. 

Let 
$l_{\infty}(G)$ be the Banach space (usual $\sup$-norm) of all bounded functions on $G$. 
 For every continuous functional $f: l_1(G) \to \R$ we have the corresponding restriction $r(f):=f \circ \delta: G \to \R$. This defines the canonical isomorphism of Banach spaces $r: l_1(G)^* \to l_{\infty}(G)$. It reflects the standard fact that 
$l_1(G)^*$ can be identified with $l_{\infty}(G)$. 

For every $s \in G$ define the functional 
$$\sigma_s: l_{\infty} \to \R, \  h  \mapsto h(s).$$ 
We have an injection $$\sigma: G \to l_{\infty}^*(G), \  s \mapsto \sigma_s$$ such that $\sigma(G)$ is also discrete in the weak-star topology of $l_{\infty}^*$. 
Furthermore, $C(G)=l_{\infty}(G)$ as a Banach space is naturally isometric to the Banach space $C(\beta G)$, where, $\beta G$ is the Chech-Stone compactification of the discrete space $G$. Consider the natural topological embedding 
$i: \beta(G) \hookrightarrow l_{\infty}^*(G)=C(\beta G)^*$. Its restriction to $G$ is just $\sigma$. So, the weak-star closure of $\sigma(G)$ in $l_{\infty}^*(G)$ can be naturally identified with $\beta(G)$.

\subsection{Tame functions on groups}  
The theory of tame dynamical systems developed in a series of works 
(see e.g. \cite{Ko, Gl-tame,GM1,GM-rose,KL,GMU}).  
Connections to other areas of mathematics like: Banach spaces, 
circularly ordered systems, 
substitutions and tilings, quasicrystals, cut and project schemes and even model theory and logic were established. See e.g. \cite{Auj,Ibar, GM-c,GM-MTame} and the survey \cite{GM-survey} for more details. 


\begin{defin} \label{d:tame(G)} (see for example \cite{GM-survey,GM-MTame}) 
	Let $f: G \to \R$ be a bounded RUC (right uniformly continuous) function on a topological group $G$. Then $f$ is said to be a \textit{tame function} if $fG:=\{fs: s \in G\}$ is a tame family of functions on $G$;  
	notation: $f \in\Tame(G)$. 
\end{defin}

Tame functions on $G$ are characterized in \cite{GM-rose} as generalized matrix coefficients of isometric continuous representations of $G$ 
on Rosenthal Banach spaces. Recall a similar result from \cite{Me-nz} which asserts that weakly almost periodic functions on $G$ are exactly matrix coefficients of representations on reflexive spaces. So, it is immediate from these results that $\WAP(G) \subset \Tame(G)$. Note also that $\RUC(G)$ is exactly the set of all matrix coefficients for Banach representations of $G$. For more information about matrix coefficients of group representations we refer to \cite{Me-nz} and \cite{GalCourse}. 

There are many interesting tame functions on groups which are not WAP. For example, on the discrete integer group $\Z$. Fibonacci bisequence $c: \Z \to \{0,1\}$ defined by Fibonacci substitution is tame but not WAP. 

It is well known that $\WAP(L_1(G))=\WAP(G)$ for every locally compact group $G$ (see Lau \cite{Lau77} and \cite{Ulger86}).  In particular, for discrete $G$ it implies that $\WAP(l_1(G))=\WAP(G)$. Our main result in this note, Theorem \ref{t:mainl_1} below, shows that a similar result holds for the tame case. 

\begin{thm} \label{t:mainl_1} 
	$\Tame (l_1(G))=\Tame (G)$ for every discrete group $G$.
\end{thm}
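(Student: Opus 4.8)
The plan is to prove the two inclusions $\Tame(l_1(G)) \subseteq \Tame(G)$ and $\Tame(G) \subseteq \Tame(l_1(G))$ separately, using the identification $l_1(G)^* = l_\infty(G)$ via the restriction map $r$, the density of $\mathrm{span}\{\delta(G)\}$ in $l_1(G)$, and the key formula \eqref{l:simple}, namely $f \cdot \delta(G) = fG$. Throughout I would apply the characterization of Rosenthal operators in Lemma \ref{l:TameFunctional}, in particular the equivalence of (1) and (4): the operator $\mathfrak{L}_\lambda : l_1(G) \to l_1(G)^*$ is Rosenthal if and only if $\mathfrak{L}_\lambda(B(l_1(G)))$ is a tame family in $l_1(G)^* = l_\infty(G)$ (as functions on $B(l_1(G)^{**})$, equivalently on $\sigma(G)$ together with its weak-star closure $\beta G$, since a linear functional on $l_1(G)^{**}$ restricted to the weak-star dense subset $\sigma(G)$ already determines whether an independent sequence exists — I would make this reduction explicit using Lemma \ref{l:genQuot} as in the proof of Lemma \ref{l:TameFunctional}).

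For the inclusion $\Tame(l_1(G)) \subseteq \Tame(G)$: suppose $f \in l_\infty(G) = l_1(G)^*$ is a tame functional. Then $\mathfrak{L}_f(B(l_1(G)))$ is tame in $l_1(G)^*$, hence its subset $f \cdot \delta(G) = fG$ is tame as a family of functions on $B(l_1(G)^{**})$, and in particular on the subset $\sigma(G) \subseteq l_1(G)^{**}$, i.e. on $G$ itself. Thus $fG$ is a tame family of functions on $G$. One must also check $f \in \RUC(G)$, which for discrete $G$ is automatic since every bounded function on a discrete group is RUC. Hence $f \in \Tame(G)$.

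For the reverse inclusion $\Tame(G) \subseteq \Tame(l_1(G))$: suppose $f \in \Tame(G)$, so $fG = f \cdot \delta(G)$ is a tame family of functions on $G$. I would first extend tameness from $\sigma(G)$ to its weak-star closure $\beta G$: since the functions in $fG$, regarded as elements of $C(\beta G)$, form a tame family on the dense subset $\sigma(G)$, and tameness of a family of continuous functions on a dense set passes to the whole space (an independent sequence on $\beta G$ would, by density and continuity together with the openness of the defining conditions $f_n^{-1}(-\infty,a)$, $f_n^{-1}(b,\infty)$, restrict to an independent sequence on $\sigma(G)$), $fG$ is tame on $\beta G = \overline{\sigma(G)}^{w^*}$. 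Next, the tame family $fG$ in $l_\infty(G)$ has a tame closed convex hull (the family of tame subsets is closed under convex hulls, weak and norm closures, by the discussion after Definition \ref{d:tameF}), and $\mathfrak{L}_f(B(l_1(G)))$ is contained in such a closed convex hull: indeed $B(l_1(G))$ is the closed convex hull of $\{\pm \delta_s : s \in G\}$, and $\mathfrak{L}_f$ is a bounded linear (hence weak-to-weak continuous) operator, so $\mathfrak{L}_f(B(l_1(G))) \subseteq \overline{\mathrm{co}}^{\,w}\{\pm f\cdot\delta_s : s\in G\} = \overline{\mathrm{co}}^{\,w}(fG \cup (-fG))$. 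Since $fG \cup (-fG)$ is tame (finite union and scalar multiples of a tame set), its weakly closed convex hull is tame, so $\mathfrak{L}_f(B(l_1(G)))$ is tame as a subset of $l_1(G)^*$. By Lemma \ref{l:TameFunctional}, $\mathfrak{L}_f$ is a Rosenthal operator, i.e. $f \in \Tame(l_1(G))$.

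The main obstacle I anticipate is the careful bookkeeping between the several ambient spaces: the family $fG$ lives naturally as functions on $G$, but tameness of the functional $f$ is about $\mathfrak{L}_f(B(l_1(G)))$ as functions on $B(l_1(G)^{**})$, and one must verify that tameness on the weak-star dense subset $\sigma(G)$ (i.e. on $G$) is genuinely equivalent to tameness on all of $B(l_1(G)^{**})$ for families of this particular form. This is exactly the kind of reduction carried out inside the proof of Lemma \ref{l:TameFunctional} via Lemma \ref{l:genQuot}; the point is that every element of the pointwise closure of such a family of linear functionals on $B(l_1(G)^*)$ is itself the restriction of a linear functional, so the pointwise closure in $\R^{\sigma(G)}$ and the pointwise closure in $\R^{B(l_1(G)^*)}$ carry the same information about independent sequences. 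Making this equivalence precise — together with the passage of tameness to the weak-star closure and the convex-hull argument — is the heart of the proof; the rest is the formula \eqref{l:simple} and the stability properties of tame families already established.
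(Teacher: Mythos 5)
Your proposal is correct and follows essentially the same route as the paper: the easy inclusion by restricting $f\cdot B(l_1(G))$ to $\delta(G)$, and the converse by extending tameness of $fG$ from $\sigma(G)$ to $\beta G$ and then to $B(l_\infty(G)^*)$, passing to the convex hull of $-fG\cup fG$, and using that $B(l_1(G))$ is the closed convex hull of $-\delta(G)\cup\delta(G)$ together with stability of tameness under convex hulls and weak closures (Lemmas \ref{c:conv} and \ref{l:w-clos}). The only step you assert without argument that the paper spells out is this density of $\mathrm{co}(-\delta(G)\cup\delta(G))$ in $B(l_1(G))$ (Lemma \ref{l:NormDenseInl_1}, whose proof uses the unit $\delta_e$ to absorb the slack when $\sum_s |c_s|<1$).
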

\begin{proof} 
	
	 $\Tame (l_1(G))\subseteq \Tame (G)$. 
	
	Let $f \in \Tame (l_1(G))$. Then by Lemma \ref{l:TameFunctional} we know that the set $\mathfrak{L}_f(B(l_1(G))) = f \cdot B(l_1(G))$ is a tame subset in $l_{\infty}(G)$. Since $\delta (G) \subset B(l_1(G))$ we get that $f \cdot \delta (G) \subset f \cdot B(l_1(G))$ is also a tame subset  in $l_{\infty}(G)$. On the other hand, 
	$f \cdot \delta (G)= fG$ by Equation \ref{l:simple}. Hence, $fG$ is a tame subset as a family of functions on $B(l_{\infty}^*(G))$. Therefore, $fG$ is a tame subset also on the subset $G=\sigma(G) \subset B(l_{\infty}(G))$. This means that $f \in \Tame (G)$.

\vskip 0.5cm	
	
\nt	$\Tame (l_1(G))\supseteq \Tame (G)$. 
	
	 Let $f \in \Tame(G)$. Then $fG$ is a tame family of functions on $G$. That is, 
	$$fG = \mathfrak{L}_{f}(\delta(G))  \subset  l_{\infty}(G)$$ is a tame family on $\sigma(G) \subset l_{\infty}^*(G)= (l_1(G))^{**}$. Then by Lemma \ref{l_1}.3, $fG$ is a tame family also on $\beta G$. 
	Moreover, by Lemma \ref{p:from X to B*} we obtain that $fG$ is a tame family on the 
	weak-star closed unit ball $B(l_{\infty}(G)^*)$ of 
	$l_{\infty}^*(G)=C(\beta(G))^*$. Then the same is true for the union $-fG \cup fG$. 
	Now the convex hull $co(-fG \cup fG)$ is also tame family of functions on the compact space $B(l_{\infty}(G)^*)$ by Lemma \ref{c:conv}.
	
\begin{lem} \label{l:NormDenseInl_1}  
	$W:=co(-\delta(G) \cup \delta(G))$ is norm dense in the unit ball of $l_1(G)$. 
\end{lem} 
	\begin{proof} 
		Let $v \in l_1(G)$ with $||v|| \leq 1$.  
		 Then, by definition, $v$ is a  function $v: G \to \R$ such that the support $Supp(v)$ is at most countable, $v=\sum_{s \in Supp(v)} v(s) \delta_s$. It is common to write it simply  $v=\sum c_s \delta_s$, where $c_s=v(s)$. 
		  Then $||v||= \sum |c_s| \leq 1$. 

For a given $\eps >0$ we need to find $w \in W$ such that $||v-w|| < \eps$. There exists a finite subset $J \subset Supp(v)$ such that $||v - \Sigma_{s \in J} c_s \delta_{s}|| < \eps$. Define $w:=\Sigma_{s \in J} c_s \delta_{s}$. Now observe that $w \in W$. Indeed, we may suppose that $J=J_+ \cup J_-$ is the disjoint union where $J_+$ correspond to positive coefficients and $J_-$ to negative coefficients. Then 
		 $$w= \sum_{s \in J} c_s \delta_{s} = \sum_{s \in J_+} c_s \delta_{s} + \sum_{s \in J_-} (-c_s) (-\delta_{s}) + \frac{\Delta}{2} \delta_e + \frac{\Delta}{2} (-\delta_e) \in W,$$
		 where $$\Delta:=1 - \sum_{s \in J_+} c_s  + \sum_{s \in J_-} (-c_s) = 1-\sum_{s \in J} |c_s| \geq 0.$$ 
		\end{proof}
	
 We claim that the weak closure $\overline{f \cdot W}^{w}$ of $f \cdot W$ in $l_{\infty}(G)$ contains $f \cdot B_{l_1(G)}$. 
	Since $\mathfrak{L}_f$ is a linear operator, taking into account $f \cdot \delta_s=fs$, we have 
	$$f \cdot W =f \cdot  co (-\delta(G) \cup \delta(G))=co (f \cdot (-\delta(G) \cup \delta(G))=co(-fG \cup fG).$$	Lemma \ref{l:NormDenseInl_1} means that 
	$\overline{W}^{||\cdot||}=B(l_1(G))$. 
	Since the operator $\mathfrak{L}_f: l_1(G) \to l_{\infty}(G)$ is norm continuous, 
	the norm closure $\overline{f \cdot W}^{||\cdot||}$ of $\mathfrak{L}_f(W)=f \cdot W$ contains $f \cdot \overline{W}^{||\cdot||}$. Summing up we get 
	$$\overline{f \cdot W}^{w}	= \overline{co(-fG \cup fG)}^{w} = \overline{co(-fG \cup fG)}^{||\cdot||}= 
	\overline{f \cdot W}^{||\cdot||}  \supseteq f \cdot \overline{W}^{||\cdot||} = f \cdot B_{l_1}(G).
	$$
	
\sk 
	
	Now we use Lemma \ref{l:w-clos} which implies that the weak closure $\overline{co(-fG \cup fG)}^{w}$,  
	hence also its subfamily $f \cdot B(l_1(G))$, are tame families on the weak-star compact ball $B(l_{\infty}^*(G))$. 
	Finally Lemma \ref{l:TameFunctional} finishes the proof. 
\end{proof}

\sk

\begin{remark}  
	One may prove similarly the following formulas:  
	\ben
	\item 
	$\WAP (l_1(G))=\WAP (G)$.
	\item 
	$\Asp (l_1(G))=\Asp (G)$.
	\een
	
The second formula is new. In the first case we may use Grothendieck's  Double Limit Property characterization of weakly precompact subsets. In the second case -- properties of fragmentable families. Namely, one may show that $T$ is Asplund operator iff $T(B(V_1))$ is a fragmented family on $B(V_2^*)$. The proof is similar to the proof of Lemma \ref{l:TameFunctional}. 
\end{remark}

\begin{remark}
The sets $\WAP(G), \Asp(G), \Tame(G)$ are distinct even for the discrete group $\Z$. Indeed, the Fibonacci bisequence $c: \Z \to \{0,1\}$ is a tame function on $\Z$ but not Asplund (see \cite{GM-c,GM-MTame}).  The characteristic function $\chi_{\N}: \Z \to \{0,1\}$ is Asplund but not WAP. Hence, the sets of functionals $\WAP(l_1(\Z)), \Asp(l_1(\Z)), \Tame(l_1(\Z))$ are also distinct. 
\end{remark}


We are going to investigate tame functionals in some future works. Among others we intend to deal with the following natural question. 

\begin{question}
	Is it true that $\Tame(L_1(G))=\Tame(G)$ for every (nondiscrete) locally compact group $G$? 
\end{question}


\sk

\section{Appendix} \label{s:app}

\subsection{Background on fragmentability and tame families}
\label{s:Ban}

The following definitions provide natural generalizations of the fragmentability concept \cite{JR}. 

\begin{defin} \label{def:fr}
	Let $(X,\tau)$ be a topological space and
	$(Y,\mu)$ a uniform space.
	\ben
	\item \cite{JOPV,me-fr} 
	$X$ is {\em $(\tau,
		\mu)$-fragmented\/} by a 
	(typically, not continuous)
	function $f: X \to Y$ if for every nonempty subset $A$ of $X$ and every $\eps
	\in \mu$ there exists an open subset $O$ of $X$ such that $O \cap
	A$ is nonempty and the set $f(O \cap A)$ is $\eps$-small in $Y$.
	We also say in that case that the function $f$ is {\em
		fragmented\/}. Notation: $f \in {\mathcal F}(X,Y)$, whenever the
	uniformity $\mu$ is understood.
	If $Y=\R$ then we write simply ${\mathcal F}(X)$.  
	
	\item \cite{GM1}
	We say that a {\it family of functions} $F=\{f: (X,\tau) \to
	(Y,\mu) \}$ is {\it fragmented} 
	if condition (1) holds simultaneously for all $f \in F$. That is, 
	$f(O \cap A)$ is $\eps$-small for every $f \in F$.
	\item \cite{GM-rose}
	We say that $F$ is an \emph{eventually fragmented family} 
	if every infinite subfamily $C \subset F$ contains
	an infinite fragmented subfamily $K \subset C$.
	\een
\end{defin}

In Definition \ref{def:fr}.1 when $Y=X, f={id}_X$ and $\mu$ is a
metric uniformity, we retrieve the usual definition of
fragmentability (more precisely, $(\tau,\mu)$-fragmentability) in the sense of Jayne and Rogers \cite{JR}.
Implicitly it already appears in a paper of Namioka and Phelps \cite{NP}. 

\begin{lem} \label{r:fr1} \cite{GM1, GM-rose}
	\ben
	
	\item If $f: (X,\tau) \to (Y,\mu)$ has 	
	\emph{a point of continuity property} PCP (i.e., for every closed
	nonempty $A \subset X$ the restriction $f_{|A}: A \to Y$ has a continuity point) 
	then it is fragmented. If $(X,\tau)$ is hereditarily Baire (e.g., compact, or Polish) and $(Y,\mu)$ is a pseudometrizable uniform space then $f$ is fragmented if and only if $f$ has PCP. 
	So, in particular, for compact $X$, the set $\F(X)$ is exactly $B'_r(X)$ in the notation of \cite{Tal}. 

	\item
	If $X$ is Polish and $Y$ is a separable metric space then
	$f: X \to Y$ is fragmented iff $f$ is a Baire class 1 function (i.e., the inverse image of every open set is
	$F_\sigma$).
	
	\een 
\end{lem}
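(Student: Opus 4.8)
I would prove the three assertions of Lemma~\ref{r:fr1} in the order listed, as they build on one another.

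\emph{PCP $\Rightarrow$ fragmented.} This is the soft half and I would argue it directly from the definitions. Given a nonempty $A\subseteq X$ and an entourage $\eps\in\mu$, pass to the closure $\ov A$ (still nonempty and closed), apply PCP to obtain a continuity point $x_0$ of $f|_{\ov A}$, choose a symmetric $\eta\in\mu$ with $\eta\circ\eta\subseteq\eps$, and extract a $\tau$-open $O\ni x_0$ with $(f(x_0),f(y))\in\eta$ for all $y\in O\cap\ov A$, so that $f(O\cap\ov A)$ is $\eps$-small. Since $x_0\in\ov A$, the set $O\cap A$ is nonempty, and $f(O\cap A)\subseteq f(O\cap\ov A)$ is $\eps$-small, which is exactly Definition~\ref{def:fr}.1. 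No hypothesis on $X$ or $Y$ is needed here.

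\emph{Fragmented $\Rightarrow$ PCP, under "hereditarily Baire + pseudometrizable".} First two reductions: (a) the restriction of a fragmented map to any subspace is again fragmented (open sets of the subspace are traces of open sets of $X$), and (b) a closed subspace of a hereditarily Baire space is a Baire space; so for a given nonempty closed $A\subseteq X$ it suffices to produce a continuity point of the fragmented map $f|_A$ on the Baire space $A$. Fixing a pseudometric $d$ compatible with $\mu$, I would set $U_n:=\bigcup\{O\subseteq A:\ O\ \text{open in}\ A,\ \diam_d f(O)\le 1/n\}$, note each $U_n$ is open, and use fragmentability (applied to each nonempty open $V\subseteq A$ with the entourage $\{(y,y'):d(y,y')\le 1/n\}$) to show $U_n$ is dense; then $\bigcap_n U_n$ is a dense $G_\delta$ consisting of continuity points of $f|_A$, by Baire category. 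The "in particular" statement is then immediate: a compact Hausdorff space is hereditarily Baire (closed subspaces are compact) and $\R$ is metrizable, so by the equivalence just proved $\F(X)$ is precisely the class of real functions on $X$ with PCP, i.e.\ Talagrand's $B'_r(X)$.

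\emph{Part (2), and the main obstacle.} For $X$ Polish and $Y$ separable metric, $X$ is hereditarily Baire and $Y$ is (pseudo)metrizable, so Part~(1) already identifies "fragmented" with "$f|_F$ has a continuity point for every nonempty closed $F\subseteq X$". What remains is to equate this with "$f^{-1}(U)$ is $F_\sigma$ for every open $U\subseteq Y$", i.e.\ with membership in Baire class~$1$; this is precisely the classical Baire characterization theorem for first-class functions, which I would invoke rather than reprove. I expect the only genuine difficulty to be matching the hypotheses of that classical theorem to the present generality (Polish domain, \emph{separable} metric range) — everything else is bookkeeping. A secondary point of care is that the category argument in the previous paragraph must be run with a pseudometrizable \emph{uniformity} rather than an honest metric, but this costs nothing, since that argument uses only a compatible pseudometric.
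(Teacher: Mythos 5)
The paper states this lemma without proof, citing \cite{GM1, GM-rose}; your argument is correct and is essentially the standard one found there: the soft direction from the definition of PCP, the converse via the Baire category argument with the dense open sets $U_n=\bigcup\{O:\diam_d f(O)\le 1/n\}$ on each closed (hence Baire) subspace, and part (2) by invoking the classical Baire characterization theorem for Polish domain and separable metric range. The only point worth noting is that reduction (b) is immediate, since ``hereditarily Baire'' is usually \emph{defined} as ``every closed subspace is Baire''; otherwise nothing is missing.
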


For other properties of fragmented maps and fragmented families
we refer to \cite{N, JOPV, me-fr, Me-nz, GM1, GM-rose, GM-tame}.  
Basic properties and applications of fragmentability in topological dynamics can be found in \cite{GM-rose, GM-survey, GM-tame}.

\sk  
\subsection{Independent sequences of functions}
\label{s:ind}

%

The following useful theorem synthesizes some known results.
It mainly is based on results of Rosenthal and Talagrand. 
The equivalence of (1), (3) and (4) is a part of \cite[Theorem 14.1.7]{Tal}
For the case (1) $\Leftrightarrow$ (2) note that every bounded independent sequence $\{f_n: X \to \R\}_{n \in \N}$ is 
an $l_1$-sequence (in the $\sup$-norm), \cite[Prop. 4]{Ros0}. On the other hand, as the proof of \cite[Theorem 1]{Ros0} shows, if $\{f_n\}_{n \in \N}$ has no independent subsequence then it has a pointwise convergent subsequence. Bounded pointwise-Cauchy sequences in $C(X)$ (for compact $X$) are weak-Cauchy as it follows by Lebesgue's theorem. Now Rosenthal's dichotomy theorem \cite[Main Theorem]{Ros0} asserts that $\{f_n\}$ has no $l_1$-sequence.  
In \cite[Sect. 4]{GM-rose} we show why eventual fragmentability of $F$ can be included in this list (item (5)).  

\begin{lem} \label{f:sub-fr} 
	Let $X$ be a compact space and $F \subset C(X)$ a bounded subset.
	The following conditions are equivalent:
	\begin{enumerate}
		\item
		$F$ does not contain an $l_1$-sequence. 
		\item $F$ is a tame family (does not contain an independent sequence). 
		\item
		Each sequence in $F$ has a pointwise convergent subsequence in $\R^X$.
		\item $F$ is a \emph{Rosenthal family} (meaning that the pointwise closure ${\cls}(F)$ of $F$ in $\R^X$ consists of fragmented maps; that is,
		${\cls}(F) \subset {\mathcal F}(X)$). 
		\item $F$ is an eventually fragmented family.
	\end{enumerate}
\end{lem}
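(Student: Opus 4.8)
\textbf{Proof plan for Lemma \ref{f:sub-fr}.} The plan is to assemble the equivalences from the cited sources rather than prove everything from scratch, since the lemma is explicitly described as a synthesis of results of Rosenthal \cite{Ros0}, Talagrand \cite{Tal}, and \cite{GM-rose}. The cleanest cycle of implications to organize the argument is $(3)\Rightarrow(1)\Rightarrow(2)\Rightarrow(3)$ for the first three items, with $(1)\Leftrightarrow(4)$ and $(1)\Leftrightarrow(5)$ brought in separately by quoting Talagrand's theorem and \cite[Sect.~4]{GM-rose} respectively.

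\textbf{First steps.} I would begin with $(2)\Rightarrow(3)$, which is the substantive half of Rosenthal's $\ell_1$-theorem in its combinatorial form: if a bounded sequence $\{f_n\}$ in $C(X)$ contains no independent subsequence, then, by inspecting the proof of \cite[Theorem 1]{Ros0}, it has a pointwise convergent subsequence in $\R^X$. Next, $(3)\Rightarrow(1)$: given a pointwise convergent subsequence, observe that on the compact space $X$ a bounded pointwise-Cauchy sequence in $C(X)$ is weak-Cauchy, because by the dominated convergence (Lebesgue) theorem the pointwise limit integrates correctly against every Radon measure on $X$, i.e.\ against every element of $C(X)^*$; a sequence with a weak-Cauchy subsequence cannot be an $\ell_1$-sequence, and the same holds for every subsequence, so $F$ contains no $\ell_1$-sequence. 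Then $(1)\Rightarrow(2)$ is the easy direction already recorded in the text: every bounded independent sequence is an $\ell_1$-sequence by \cite[Prop.~4]{Ros0}, so absence of $\ell_1$-sequences forces absence of independent sequences. This closes the loop among (1), (2), (3).

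\textbf{Remaining equivalences.} For $(1)\Leftrightarrow(4)$ I would quote \cite[Theorem 14.1.7]{Tal} directly: Talagrand's characterization says precisely that, for a bounded $F\subset C(X)$ with $X$ compact, $F$ contains no $\ell_1$-sequence if and only if the pointwise closure $\cls(F)$ in $\R^X$ consists of functions with the point of continuity property, and by Lemma \ref{r:fr1}.1 (using that $X$ is compact, hence hereditarily Baire, and $\R$ is metrizable) PCP coincides with fragmentability, so $\cls(F)\subset\F(X)$; this is exactly the statement that $F$ is a Rosenthal family. For $(1)\Leftrightarrow(5)$ I would cite \cite[Sect.~4]{GM-rose}, where eventual fragmentability is shown to belong to this list; concretely, if $F$ is Rosenthal then every infinite $C\subset F$, having all its pointwise cluster points fragmented, can be refined to an infinite fragmented subfamily, and conversely an eventually fragmented bounded family cannot contain an independent (hence nontame, hence non-fragmented in the pointwise closure) sequence, since an independent sequence has no infinite fragmented subsequence.

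\textbf{Main obstacle.} The only genuinely delicate point is $(2)\Rightarrow(3)$: this is not the headline statement of Rosenthal's dichotomy but a sharpening of its proof, namely that the dichotomy is witnessed by \emph{independence} of the bad subsequence, not merely by $\ell_1$-equivalence. Rather than reproving it, I would lean on the fact that the proof of \cite[Theorem 1]{Ros0} already produces an independent subsequence in the non-convergent case, and pair it with Talagrand's formulation in \cite{Tal} where this is made explicit; the rest of the argument is routine bookkeeping about passing to subsequences and invoking Lebesgue's theorem on the compact space $X$.
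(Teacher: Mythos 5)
Your proposal is correct and follows essentially the same route as the paper: the paper likewise cites Talagrand \cite[Theorem 14.1.7]{Tal} for the equivalence of (1), (3), (4), uses \cite[Prop.~4]{Ros0} for independent $\Rightarrow$ $\ell_1$, extracts the pointwise convergent subsequence from the proof of \cite[Theorem 1]{Ros0}, invokes Lebesgue's theorem to get weak-Cauchy and Rosenthal's dichotomy to exclude $\ell_1$-sequences, and refers to \cite[Sect.~4]{GM-rose} for item (5). Your reorganization into the cycle $(3)\Rightarrow(1)\Rightarrow(2)\Rightarrow(3)$ merely makes explicit the intermediate role of (3) in the paper's argument for $(2)\Rightarrow(1)$; the substance is identical.
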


\begin{lem} \label{l_1} \
	\ben
	\item
	Let $q: X_1 \to X_2$ be a map between sets and
	$\{f_n: X_2 \to \R\}_{n \in \N}$ a bounded sequence of functions \emph{(with no continuity assumptions on $q$ and $f_n$)}.
	If $\{f_n \circ q\}$ is an independent sequence on $X_1$
	then $\{f_n\}$ is an independent sequence on $X_2$.
	\item
	If $q$ is onto then the converse is also true. That is $\{f_n \circ q\}$ is independent if and only if $\{f_n\}$ is independent.
	\item
	Let $\{f_n\}$ be a bounded sequence of continuous
	functions on a topological space $X$.
	Let $Y$ be a \emph{dense} subset of $X$. Then $\{f_n\}$ is an independent sequence on $X$ if and only if
	the sequence of restrictions
	$\{f_n|_Y\}$ is an independent sequence on $Y$.
	\een
\end{lem}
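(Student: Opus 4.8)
\textbf{Proof plan for Lemma \ref{l_1}.}

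The plan is to treat the three parts separately, with part (1) doing the real work and parts (2) and (3) following as easy corollaries. For part (1), I would argue contrapositively: assume $\{f_n\}$ is \emph{not} independent on $X_2$ and deduce that $\{f_n \circ q\}$ is not independent on $X_1$. Unwinding Definition of independence, saying $\{f_n\}$ is not independent means that for every pair of reals $a < b$ there exist finite disjoint $P, M \subset \N$ with
$$
\bigcap_{n \in P} f_n^{-1}(-\infty,a) \cap \bigcap_{n \in M} f_n^{-1}(b,\infty) = \emptyset \quad \text{in } X_2.
$$
Applying $q^{-1}$ to this empty set (and using that preimage commutes with intersections and that $(f_n \circ q)^{-1}(U) = q^{-1}(f_n^{-1}(U))$) gives that the corresponding intersection for $\{f_n \circ q\}$ over the \emph{same} $P, M$ is empty in $X_1$. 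Since this holds for every choice of $a < b$, the sequence $\{f_n \circ q\}$ fails the independence condition on $X_1$ — which is exactly the contrapositive of the claim. No continuity is used anywhere, consistent with the parenthetical remark.

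For part (2), when $q$ is onto I need the reverse implication: if $\{f_n \circ q\}$ is independent on $X_1$ then $\{f_n\}$ is independent on $X_2$. Here surjectivity of $q$ gives $q(q^{-1}(A)) = A$ for every $A \subseteq X_2$, so a nonempty preimage set $q^{-1}(A)$ forces $A$ nonempty. Thus, given the witnessing constants $a<b$ for $\{f_n\circ q\}$, for any finite disjoint $P,M$ the set $\bigcap_{n\in P}(f_n\circ q)^{-1}(-\infty,a)\cap\bigcap_{n\in M}(f_n\circ q)^{-1}(b,\infty)=q^{-1}\big(\bigcap_{n\in P}f_n^{-1}(-\infty,a)\cap\bigcap_{n\in M}f_n^{-1}(b,\infty)\big)$ is nonempty, hence its image under $q$, which equals $\bigcap_{n\in P}f_n^{-1}(-\infty,a)\cap\bigcap_{n\in M}f_n^{-1}(b,\infty)$, is nonempty in $X_2$. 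Combining with part (1) yields the stated equivalence.

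For part (3), let $Y \subseteq X$ be dense and let $\iota: Y \hookrightarrow X$ be the inclusion; then $f_n|_Y = f_n \circ \iota$, so one direction (independence on $X$ implies independence on $Y$) is immediate from part (1) — and in fact does not even need density or continuity. The other direction is where density and continuity of the $f_n$ enter: suppose $\{f_n|_Y\}$ is independent on $Y$ with constants $a < b$, and fix finite disjoint $P, M$. Each set $\bigcap_{n \in P} f_n^{-1}(-\infty,a) \cap \bigcap_{n \in M} f_n^{-1}(b,\infty)$ is \emph{open} in $X$ by continuity of the $f_n$, and it contains the nonempty set obtained by intersecting with $Y$ (which is nonempty by the independence on $Y$ applied with the same $P,M$ — strictly, one should first shrink $a,b$ slightly, say to $a' \in (a,b)$, $b' \in (a',b)$, so that the weak inequalities coming from $Y$-points upgrade to the strict ones needed; points of $Y$ landing in $f_n^{-1}(-\infty,a')$ certainly lie in the open set $f_n^{-1}(-\infty,a)$, etc.). An open set meeting the dense set $Y$ is nonempty in $X$, so the intersection is nonempty in $X$, giving independence of $\{f_n\}$ on $X$ with constants $a < b$.

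The main obstacle is the small bookkeeping in part (3): making sure the strict-versus-nonstrict inequality issue is handled cleanly, i.e., that independence of $\{f_n|_Y\}$ with some constants produces, after an arbitrarily small adjustment, points of $X$ in the required \emph{strictly} defined open sets. Everything else is a routine manipulation of preimages and images under $q$, so I would keep those parts terse.
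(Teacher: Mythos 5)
Your part (1) is correct (the paper simply declares (1) and (2) straightforward), but in parts (2) and (3) you have the two implications swapped, and in part (3) this leaves a genuine gap. Apply part (1) to the inclusion $\iota\colon Y\hookrightarrow X$, i.e.\ $X_1=Y$, $X_2=X$, $f_n\circ\iota=f_n|_Y$: it yields that independence of $\{f_n|_Y\}$ on $Y$ implies independence of $\{f_n\}$ on $X$. So the direction that is ``immediate from part (1), needing no density or continuity'' is $Y\Rightarrow X$, not $X\Rightarrow Y$ as you claim. The direction $X\Rightarrow Y$ is precisely where continuity and density must enter, and it is false without them: the nonempty witnessing set $\bigcap_{n\in P} f_n^{-1}(-\infty,a)\cap\bigcap_{n\in M} f_n^{-1}(b,\infty)$ could consist of points lying entirely outside $Y$. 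The paper's argument for this direction is: by continuity of the $f_n$ this set is \emph{open}, hence meets the dense set $Y$, and $f_n^{-1}(-\infty,a)\cap Y=(f_n|_Y)^{-1}(-\infty,a)$, $f_n^{-1}(b,\infty)\cap Y=(f_n|_Y)^{-1}(b,\infty)$. In your write-up the density/openness argument is instead spent on the $Y\Rightarrow X$ direction, where it is vacuous (a set containing a nonempty subset is nonempty; density plays no role), and your concern about strict versus non-strict inequalities is moot since the rays $(-\infty,a)$ and $(b,\infty)$ are already open. As written, the implication $X\Rightarrow Y$ is never actually proved.

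The same swap occurs, more harmlessly, in part (2). What you derive there --- a nonempty preimage $q^{-1}(I)$ forces $I$ nonempty --- is just part (1) again and needs no surjectivity. The content of part (2) is the converse: if $\{f_n\}$ is independent then so is $\{f_n\circ q\}$, because surjectivity of $q$ guarantees $q^{-1}(I)\neq\emptyset$ whenever $I=\bigcap_{n\in P} f_n^{-1}(-\infty,a)\cap\bigcap_{n\in M} f_n^{-1}(b,\infty)$ is nonempty. That repair is one line, but the directions actually required in (2) and (3) are the ones your proposal omits.
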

\begin{proof}
	Claims (1) and (2) are straightforward.
	
	(3)
	Since $\{f_n\}$ is an independent sequence 
	for every pair of finite disjoint sets
	$P, M \subset \N$,
	the set
	$$
	\bigcap_{n \in P} f_n^{-1}(-\infty,a) \cap  \bigcap_{n \in M} f_n^{-1}(b,\infty)
	$$
	is non-empty. 
	This set is open because every $f_n$ is continuous. 
	Hence, each of them meets the dense set $Y$.
	As
	$f_n^{-1}(-\infty,a) \cap Y= f_n|_Y^{-1}(-\infty,a)$ and
	$f_n^{-1}(b,\infty) \cap Y= f_n|_Y^{-1}(b,\infty)$,
	this implies that $\{f_n|_Y\}$ is an independent sequence on $Y$.
	
	Conversely if $\{f_n|_Y\}$ is an independent sequence on a subset $Y \subset X$ then by (1)
	(where $q$ is the embedding $Y \hookrightarrow X$),
	$\{f_n\}$ is an independent sequence on $X$.
\end{proof}

\begin{lem} \label{l:genQuot} 
	Let $q: X_1 \to X_2$ be a map between sets. 
	\ben
	\item
	The natural map
	$\gamma: \R^{X_2} \to \R^{X_1}, \  \gamma(\phi)=\phi \circ q$
	is pointwise continuous.
	\item If $q: X_1 \to X_2$ is onto then $\gamma$ is injective.
	\item 
	$F_2 \subset [a,b]^{X_2}$ and $F_1 \subset [a,b]^{X_2}$ be subsets such that $F_1=F_2 \circ q$. Then $\g(\cls_p(F_2))=\cls_p(F_1)$.
	\item
	Let $q: X_1 \to X_2$ be a continuous onto map between compact spaces, 
	$F_2 \subset C(X_2)$ and $F_1 \subset C(X_1)$ be norm
	bounded subsets such that $F_1=F_2 \circ q$. Then
	\begin{itemize}
		\item [(a)]
		$F_1$ is a Rosenthal family for $X_1$ if and only if $F_2$ is a
		Rosenthal family for $X_2$. 
		\item [(b)]
		$\gamma$ induces a
		homeomorphism between the compact spaces $\cls_p(F_2)$ and
		$\cls_p(F_1)$.
	\end{itemize}
	\een
\end{lem}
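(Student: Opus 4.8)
The plan is to handle the four items in order; items (1)--(3) are formal facts about the product topology on function spaces, and (4) will be reduced to a characterization already available in the excerpt.

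For (1) I would argue as in the elementary Lemma~\ref{l_1}: for each $x_1\in X_1$ the coordinate projection $\pi_{x_1}\colon\R^{X_1}\to\R$ satisfies $\pi_{x_1}\circ\gamma=\pi_{q(x_1)}$, which is pointwise continuous on $\R^{X_2}$; since the product topology is the initial topology for the projections, $\gamma$ is pointwise continuous. For (2), if $\phi\circ q=\psi\circ q$ and $q$ is onto, then $\phi=\psi$ on $q(X_1)=X_2$. For (3) (where $F_1\subseteq[a,b]^{X_1}$), I would note that $\gamma$ carries $[a,b]^{X_2}$ into $[a,b]^{X_1}$, so by (1) it restricts to a continuous map between these two compact cubes; then $\cls_p(F_2)$ is closed in the compact space $[a,b]^{X_2}$, hence compact, so $\gamma(\cls_p(F_2))$ is compact, hence closed in $[a,b]^{X_1}$, and it contains $\gamma(F_2)=F_1$; this gives $\cls_p(F_1)\subseteq\gamma(\cls_p(F_2))$, while the reverse inclusion $\gamma(\cls_p(F_2))\subseteq\cls_p(\gamma(F_2))=\cls_p(F_1)$ is continuity of $\gamma$.

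For (4)(b) I would fix $a<b$ with $F_2\subseteq[a,b]^{X_2}$ and $F_1\subseteq[a,b]^{X_1}$ (possible by norm boundedness, using $\|f\circ q\|_\infty=\|f\|_\infty$) and combine (1)--(3): $\gamma$ restricts to a continuous bijection of $\cls_p(F_2)$ onto $\cls_p(F_1)$ --- injective by (2) since $q$ is onto, surjective by (3) --- and a continuous bijection from a compact space (namely $\cls_p(F_2)$, closed in $[a,b]^{X_2}$) onto a Hausdorff space (namely $\cls_p(F_1)\subseteq\R^{X_1}$) is a homeomorphism. For (4)(a) I would avoid a direct transfer of fragmentability along $q$ and instead use the $l_1$-dichotomy form of ``Rosenthal family'': by Lemma~\ref{f:sub-fr} (equivalence of (1) and (4) there), for $i=1,2$ the condition ``$F_i$ is a Rosenthal family for $X_i$'' is equivalent to ``$F_i$ contains no $l_1$-sequence in the $\sup$-norm of $C(X_i)$''. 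Since $q$ is onto, $\|\sum_i c_i(f_i\circ q)\|_\infty=\|(\sum_i c_i f_i)\circ q\|_\infty=\|\sum_i c_i f_i\|_\infty$ for all $f_i\in F_2$ and scalars $c_i$, so precomposition with $q$ is a $\sup$-norm isometry on linear spans; moreover $\gamma|_{F_2}\colon F_2\to F_1$ is a bijection (onto because $F_1=F_2\circ q$, one-to-one by (2)). Hence a sequence in $F_2$ is an $l_1$-sequence on $X_2$ if and only if its $\gamma$-image is an $l_1$-sequence on $X_1$, so $F_1$ contains an $l_1$-sequence iff $F_2$ does, and (4)(a) follows.

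The step I expect to be the real obstacle is the implication ``$F_1$ Rosenthal $\Rightarrow$ $F_2$ Rosenthal'' in (4)(a) if one insists on transferring (eventual) fragmentability directly: $q$ is a continuous surjection of compact spaces but in general is not open, so the open sets witnessing fragmentability of $\phi\circ q$ on $X_1$ cannot simply be pushed down to open sets on $X_2$. One can repair this by replacing $q^{-1}(B)$ by a minimal closed subset $C$ with $q(C)=B$ and using that $q|_C$ is irreducible (images of nonempty open subsets of $C$ then have nonempty interior in $B$), but this is cumbersome; routing (4)(a) through Lemma~\ref{f:sub-fr} and the isometry observation above avoids it entirely. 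Everything else is routine point-set topology.
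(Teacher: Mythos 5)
Your proof is correct. For items (1)--(3) and (4)(b) it coincides with the paper's argument: the paper likewise dismisses (1) and (2) as trivial, proves (3) by exactly your compactness argument (continuity of $\gamma$ gives $\gamma(\cls_p(F_2))\subset\cls_p(\gamma(F_2))$, while compactness of $\cls_p(F_2)$ makes $\gamma(\cls_p(F_2))$ closed and containing $\gamma(F_2)=F_1$, forcing equality), and obtains (4)(b) by combining (1)--(3) with the fact that a continuous bijection from a compact space to a Hausdorff space is a homeomorphism. The one genuine divergence is (4)(a). The paper deduces it from part (3) together with Lemma \ref{l:factor}, the fragmentability-transfer lemma recalled from \cite{GM-rose}: since $\cls_p(F_1)=\cls_p(F_2)\circ q$ and $q$ is a continuous surjection of compacta, an element of $\cls_p(F_1)$ is fragmented iff its (unique) $\gamma$-preimage in $\cls_p(F_2)$ is. You instead route through the equivalence (1) $\Leftrightarrow$ (4) of Lemma \ref{f:sub-fr} and the observation that precomposition with an onto map is a $\sup$-norm isometry on linear spans, so that $l_1$-sequences transfer in both directions along the bijection $\gamma|_{F_2}\colon F_2\to F_1$. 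Both arguments are sound. Your version sidesteps exactly the difficulty you flag --- $q$ need not be open, so fragmentability does not push forward naively --- whereas the paper absorbs that difficulty into the citation of Lemma \ref{l:factor}; the trade-off is that you lean on the Rosenthal--Talagrand equivalence of Lemma \ref{f:sub-fr} (which the paper needs elsewhere anyway), while the paper's route for this step uses only the more elementary transfer lemma and works directly with the defining property of a Rosenthal family.
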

\begin{proof}  Claims (1) and (2) are trivial.
	
	(3)(a): By the continuity of $\gamma$ we get $\gamma(F_2) \subset
	\gamma(\cls_p(F_2)) \subset \cls_p(\gamma(F_2))$. Since $F_2$ is
	bounded the set $\cls_p(F_2)$ is compact in $\R^{X_2}$. Then
	$\gamma(\cls_p(F_2))=\cls_p(\gamma(F_2))$. On the other hand,
	$\gamma(F_2) = F_2 \circ q = F_1$. Therefore, $\cls_p(F_2) \circ q
	= \cls_p(F_1)$. 
	
	(4)(a) Use (3) and Lemma \ref{l:factor}.
	
	(3)(b): Combine the assertions (1) and (2) taking into account
	that $\gamma(\cls_p(F_2))=\cls_p(\gamma(F_2))=\cls_p(F_1)$.
\end{proof}


%

Recall some useful Lemmas from \cite{GM-rose}. 

\begin{lem}  \label{l:factor} 
	Let $\a: X \to X'$ be a continuous onto map between compact
	spaces. Assume that $(Y, \mu)$ is a uniform space, $f: X \to Y$
	and $f': X' \to Y$ are maps such that $f' \circ \a=f$. Then $f$
	is a fragmented map if and only if $f'$ is a fragmented map.
\end{lem}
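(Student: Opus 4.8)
The plan is to prove the two implications separately, the forward one being routine and the converse requiring the compactness hypothesis in an essential way.

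First I would treat the easy implication: if $f'$ is fragmented, then so is $f=f'\circ\a$. Given a nonempty $A\subseteq X$ and an entourage $\eps\in\mu$, I would apply fragmentability of $f'$ to the nonempty set $\a(A)\subseteq X'$, obtaining an open $O'\subseteq X'$ with $O'\cap\a(A)\neq\emptyset$ and $f'(O'\cap\a(A))$ being $\eps$-small. Setting $O:=\a^{-1}(O')$, which is open by continuity of $\a$, one checks that $O\cap A\neq\emptyset$ (a point of $\a(A)$ lying in $O'$ lifts into $O\cap A$) and that $f(O\cap A)\subseteq f'(O'\cap\a(A))$, hence $\eps$-small. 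This direction uses neither surjectivity nor compactness.

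Second, the substantive implication: $f$ fragmented $\Rightarrow$ $f'$ fragmented. I would first record the standard reduction to closed sets: it suffices to show that every nonempty closed $F'\subseteq X'$ and every $\eps$ admit a nonempty \emph{relatively} open $U'\subseteq F'$ with $f'(U')$ being $\eps$-small, since such a $U'$ is the trace on $F'$ of an open subset of $X'$. I would argue by contradiction, assuming $F'$ is a nonempty closed set that is nowhere $\eps$-fragmented (every open $O'$ meeting $F'$ has $f'(O'\cap F')$ \emph{not} $\eps$-small); passing to a closure makes such a witness closed, hence compact, and I set $F:=\a^{-1}(F')$, a compact nonempty subset of $X$. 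The crux is that the $\a$-image of an open set need not be open, so pulling a good open subset of $X$ down to $X'$ is the real difficulty. To circumvent it I would pass to a minimal preimage: consider the family of closed sets $K\subseteq F$ with $\a(K)=F'$, ordered by reverse inclusion; using compactness of the fibers $\a^{-1}(x')\cap K$ one verifies that the intersection along any chain still surjects onto $F'$ and stays nonempty, so Zorn's lemma yields a closed $K$ minimal for the property $\a(K)=F'$, i.e. $\a|_K\colon K\to F'$ is irreducible. Now apply fragmentability of $f$ to the subset $K$: there is an open $O\subseteq X$ with $O\cap K\neq\emptyset$ and $f(O\cap K)$ being $\eps$-small. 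Then $K\setminus O$ is a proper closed subset of $K$, so by minimality $\a(K\setminus O)\neq F'$, and $U':=F'\setminus\a(K\setminus O)$ is relatively open in $F'$ (as $\a(K\setminus O)$ is compact, hence closed) and nonempty. For each $y'\in U'$ the whole fiber $\a^{-1}(y')\cap K$ is nonempty and contained in $O$, so $f'(y')=f(z)$ for some $z\in O\cap K$; thus $f'(U')\subseteq f(O\cap K)$ is $\eps$-small, contradicting that $F'$ is nowhere $\eps$-fragmented.

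I expect the single genuine obstacle to be exactly the non-openness of $\a(O)$; the irreducibility of $K$ is the device that manufactures points whose \emph{entire} fiber lands inside the chosen open set, which is what converts ``$\eps$-smallness upstairs'' into ``$\eps$-smallness downstairs.'' A secondary point to handle with care is the Zorn step, where compactness of $X'$ (so that $F'$ and all relevant fibers are compact) is precisely what keeps surjectivity intact when intersecting a chain.
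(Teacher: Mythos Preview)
Your argument is correct. Note, however, that the paper does not supply its own proof of this lemma: it is listed in the Appendix under ``Recall some useful Lemmas from \cite{GM-rose}'' and left unproved, so there is nothing in the present text to compare your approach against.

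For what it is worth, the irreducible-restriction device you use is the standard way to push fragmentability forward along a continuous surjection between compact (Hausdorff) spaces, and it is the argument one finds in the literature. One cosmetic remark: the contradiction wrapper in the second half is unnecessary. Once you have the closed-set reduction, you can argue directly: given a nonempty closed $F'\subseteq X'$ and $\eps\in\mu$, choose a minimal closed $K\subseteq\alpha^{-1}(F')$ with $\alpha(K)=F'$, pick $O$ open in $X$ with $O\cap K\neq\emptyset$ and $f(O\cap K)$ $\eps$-small, and then $U':=F'\setminus\alpha(K\setminus O)$ is already the required nonempty relatively open subset of $F'$ with $f'(U')\subseteq f(O\cap K)$. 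There is no need to assume beforehand that $F'$ is ``nowhere $\eps$-fragmented.''
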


\begin{lem} \label{c:conv} 
	Let $F \subset C(X)$ be a tame (eq. Rosenthal) family for a compact space $X$. Then its
	convex hull ${co}(F)$ is also a tame family for $X$.
\end{lem}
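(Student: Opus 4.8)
The plan is to deduce this from the representation theorem for tame families quoted after Lemma~\ref{l:RosBanSpCharact}, together with the one-directional stability of (non-)independence under pullbacks, Lemma~\ref{l_1}.

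First I would fix a representation of the tame bounded family $F$ of continuous functions on the compact space $X$ on a Rosenthal Banach space $V$: a pair of bounded maps $\nu\colon F\to V$ and $\alpha\colon X\to V^*$, with $\alpha$ weak-star continuous, such that $f(x)=\langle\nu(f),\alpha(x)\rangle$ for all $f\in F$ and $x\in X$. The key elementary observation is that a convex combination of functions represented this way is again represented on $V$, by the corresponding convex combination of vectors: if $g=\sum_i\lambda_i f_i\in\co(F)$ then $g(x)=\langle\tilde g,\alpha(x)\rangle$ where $\tilde g:=\sum_i\lambda_i\nu(f_i)\in\co(\nu(F))\subseteq V$. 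Hence, identifying vectors of $V$ with weak-star continuous linear functions on $V^*$, the family $\co(F)$ on $X$ is precisely the pullback along $\alpha$ of the family $\co(\nu(F))|_{\alpha(X)}$ of functions on the bounded set $\alpha(X)\subseteq V^*$ (here $\alpha(X)$ is bounded because $\alpha$ is a bounded map).

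Next I would invoke the characterization of Rosenthal Banach spaces: by the observation following Lemma~\ref{l:RosBanSpCharact}, since $V$ is Rosenthal, the bounded set $\co(\nu(F))$ is a tame family of functions on every bounded subset of $V^*$, in particular on $\alpha(X)$. Now take any sequence $(g_n)$ in $\co(F)$ and write $g_n=\tilde g_n\circ\alpha$ with $\tilde g_n\in\co(\nu(F))$; the restrictions $\tilde g_n|_{\alpha(X)}$ form a subfamily of a tame family, so they contain no independent subsequence, and by Lemma~\ref{l_1}.1, applied to the onto map $q=\alpha\colon X\to\alpha(X)$, the pulled-back sequence $(g_n)=(\tilde g_n\circ\alpha)$ contains no independent subsequence either. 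Thus $\co(F)$ contains no independent sequence, i.e.\ it is a tame family for $X$, as claimed.

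I do not expect a serious obstacle along this route: the only substantial ingredients are already available, namely the (cited) representation theorem for tame families and Lemma~\ref{l:RosBanSpCharact}, and everything else is the bookkeeping in the identity $g(x)=\langle\tilde g,\alpha(x)\rangle$ plus one application of Lemma~\ref{l_1}. The point where care is needed is precisely that $\alpha$ need not be surjective onto a ball, so one cannot transfer tameness from $X$ up to $V^*$; it is essential that the transfer goes the other way (from $\co(\nu(F))$ on $V^*$ down to $\co(F)$ on $X$), which is exactly the direction Lemma~\ref{l_1}.1 provides. An alternative, more classical proof would reformulate ``tame'' in $C(X)$ as ``weakly precompact'' via Rosenthal's $\ell_1$-dichotomy (Lemma~\ref{f:sub-fr}) and then use the fact that weak precompactness passes to closed convex hulls; but establishing that fact directly---by manufacturing an $\ell_1$-sequence inside $F$ from an independent sequence inside $\co(F)$---is itself the hard step, so the representation-theoretic argument is preferable.
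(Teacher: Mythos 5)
The paper itself gives no proof of Lemma \ref{c:conv}: it is one of the ``useful Lemmas from \cite{GM-rose}'' quoted without argument in the Appendix. Judged on its own terms, your derivation is locally sound: the identity $g=\tilde g\circ\alpha$ for $g\in\co(F)$ with $\tilde g\in\co(\nu(F))$ is correct, the bounded set $\co(\nu(F))$ is indeed tame on the bounded set $\alpha(X)\subset V^*$ by the remark following Lemma \ref{l:RosBanSpCharact} (since $V$ is Rosenthal), and you apply Lemma \ref{l_1}.1 in the right direction, correctly observing that tameness can only be pulled back along $\alpha$, not pushed forward.

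The problem is not a local error but circularity. The representation theorem you take as input --- every bounded tame family of continuous functions on a compact space is representable on a Rosenthal Banach space --- is established in \cite{GM-rose, GM-tame} by a Davis--Figiel--Johnson--Pe{\l}czy\'nski-type construction in which the representing space is generated by the (absolutely) convex hull of the family; the verification that this space contains no copy of $l_1$ is exactly where the convex-hull stability of tameness, i.e.\ Lemma \ref{c:conv} itself, is invoked. So within the cited framework you are deriving the lemma from a theorem that rests on it. A non-circular proof works directly with the characterization of tame families as Rosenthal families (Lemma \ref{f:sub-fr}, condition (4)): one shows that the pointwise closure of $\co(F)$ in $\R^X$ still consists of fragmented maps, using Bourgain--Fremlin--Talagrand/Talagrand-type results on convex hulls of pointwise compact sets of Baire-1 (fragmented) functions. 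Your closing remark about weak precompactness passing to convex hulls points toward the same analytic content; that, rather than the representation machinery, is where the real work lies.
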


\begin{lem} \label{p:from X to B*} 
	Let $X$ be a compact space and $F \subset C(X)$ be a bounded family.
	Then  
$F$ is a tame family for $X$ if and only if $F$ is a tame family for the weak-star compact unit ball $B(C(X)^*)$.
\end{lem}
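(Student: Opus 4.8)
The plan is to reduce both implications to the $l_1$-sequence characterization of tameness provided by Lemma \ref{f:sub-fr}. Write $V=C(X)$, so that $V^*=C(X)^*$ and, under the canonical isometric embedding $V\hookrightarrow V^{**}$, every $f\in F$ becomes a $w^*$-continuous (hence continuous) affine function on the $w^*$-compact ball $K:=B(C(X)^*)$. Thus $F$ may be regarded simultaneously as a bounded subset of $C(X)$ and as a bounded subset of $C(K)$. The one elementary observation that drives everything is that for each $g\in C(X)$
\[
\|g\|_{C(X)}=\sup_{x\in X}|g(x)|=\sup_{\phi\in K}|\phi(g)|=\|g\|_{C(K)},
\]
since the embedding $C(X)\hookrightarrow C(X)^{**}$ is isometric. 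Hence the two sup-norms induced on the linear span of $F$ — the one coming from $X$ and the one coming from $K$ — literally coincide.

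Given this, I would first note that a uniformly bounded sequence $\{f_n\}\subset F$ is an $l_1$-sequence as a family of functions on $X$ if and only if it is an $l_1$-sequence as a family of functions on $K$: by the norm identity, for every finite choice of scalars $c_1,\dots,c_n$ the quantities $\|\sum_i c_if_i\|_{C(X)}$ and $\sup_{\phi\in K}|\phi(\sum_i c_if_i)|=\|\sum_i c_if_i\|_{C(K)}$ are equal, so Rosenthal's defining inequality holds with a given constant $a>0$ in one case precisely when it holds (with the same $a$) in the other. Now apply Lemma \ref{f:sub-fr} twice: once to the compact space $X$ with the bounded family $F\subset C(X)$, and once to the compact space $K$ with the bounded family $F\subset C(K)$. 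In each case tameness is equivalent to the absence of an $l_1$-sequence, and by the previous sentence these two ``no $l_1$-sequence'' conditions are identical. Therefore $F$ is a tame family for $X$ if and only if $F$ is a tame family for $B(C(X)^*)$.

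Two remarks on alternative routes, for the record. The ``only if'' direction alone is immediate from Lemma \ref{l_1}.1 applied to the evaluation map $q:X\to K$, $x\mapsto \mathrm{ev}_x$ (noting $f_n=\tilde f_n\circ q$, where $\tilde f_n$ is the $w^*$-continuous extension of $f_n$ to $K$): an independent sequence on $X$ would yield one on $K$. The ``if'' direction can alternatively be deduced from clause (3) of Lemma \ref{f:sub-fr}: given $\{f_n\}\subset F$, pass to a subsequence $\{f_{n_k}\}$ converging pointwise on $X$ to a bounded $g$, and then use the Riesz representation theorem together with Lebesgue's dominated convergence theorem to conclude $\phi(f_{n_k})=\int f_{n_k}\,d\mu_\phi\to\int g\,d\mu_\phi$ for every $\phi\in K$, so that the subsequence converges pointwise on $K$ too. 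There is no real obstacle in this lemma; the only point requiring care is the identification of $F$ with a bounded subset of $C(K)$ and the verification that the relevant sup-norms agree — once that bookkeeping is done, Lemma \ref{f:sub-fr} supplies the conclusion directly.
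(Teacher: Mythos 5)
Your main argument is correct, and it is worth noting that the paper itself gives no proof of this lemma at all: it is merely quoted in the appendix from \cite{GM-rose}, so there is nothing in-text to compare against. Your route is clean and self-contained: since the canonical embedding $C(X)\hookrightarrow C(X)^{**}$ is isometric, $\sup_{x\in X}|g(x)|=\sup_{\phi\in B(C(X)^*)}|\phi(g)|$ for every $g$ in the linear span of $F$, so the $l_1$-sequence condition (which depends only on sup-norms of finite linear combinations) is verbatim the same whether $F$ is viewed on $X$ or on $K=B(C(X)^*)$; both $X$ and $K$ are compact and $F$ sits in $C(X)$ and in $C(K)$ as a bounded family, so applying the equivalence (1) $\Leftrightarrow$ (2) of Lemma \ref{f:sub-fr} on each side closes the loop. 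This is arguably more economical than the route one would extract from \cite{GM-rose}, which goes through fragmented maps in the pointwise closure.

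One cosmetic slip in your closing remarks: you have the two directions swapped. Lemma \ref{l_1}.1 applied to $q:X\to K$ shows that an independent sequence on $X$ yields one on $K$, which by contraposition gives ``tame for $K$ $\Rightarrow$ tame for $X$'' --- that is the \emph{if} direction, not the \emph{only if}. Dually, the Riesz/dominated-convergence argument (pointwise convergence on $X$ forces pointwise convergence on $K$) proves ``tame for $X$ $\Rightarrow$ tame for $K$'', i.e.\ the \emph{only if} direction. Both side arguments are mathematically sound, and since your main proof treats the two directions symmetrically, this mislabeling does not affect the validity of the proof.
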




\bibliographystyle{amsplain}

\end{document}